\documentclass[reqno, 12pt]{amsart}
\usepackage{amsmath,amssymb,amscd}
\usepackage[centering, textwidth=6in,textheight=9in]{geometry}
\usepackage[percent]{overpic}
\usepackage{parskip}


\usepackage[colorlinks, hyperindex, linkcolor=blue, citecolor=blue, urlcolor=blue]{hyperref}

\newtheorem{thm}{Theorem}[section]
\newtheorem{cor}[thm]{Corollary}

\newtheorem{pro}[thm]{Proposition}

\theoremstyle{definition}
\newtheorem{definition}[thm]{Definition}
\newtheorem{remark}[thm]{Remark}
\newtheorem{con}[thm]{Conjecture}

\numberwithin{equation}{section}

\newcommand{\bC}{\mathbb{C}}

\newcommand{\bT}{\mathbb{T}}

\newcommand{\bZ}{\mathbb{Z}}

\newcommand\cF{\mathcal{F}}
\newcommand\cG{\mathcal{G}}

\newcommand\cN{\mathcal{N}}
\newcommand\cO{\mathcal{O}}
\newcommand\cP{\mathcal{P}}

\newcommand\cR{\mathcal{R}}

\newcommand\cU{\mathcal{U}}
\newcommand\cV{\mathcal{V}}
\newcommand\cW{\mathcal{W}}
\newcommand\cX{\mathcal{X}}

\newcommand\pa{\partial}

\newcommand\eps{\epsilon}

\newcommand\Diff{\mathrm{Diff}}
\newcommand\PH{\mathrm{PH}}

\begin{document}

\title[Homoclinic points of symplectic partially hyperbolic systems]{Homoclinic points of symplectic partially hyperbolic systems with 2D center}

\author[P. Zhang]{Pengfei Zhang}
\address{Department of Mathematics, University of Oklahoma, Norman, OK 73069 USA}
\email{pengfei.zhang@ou.edu}

\subjclass[2000]{37C29, 37C20, 37D30}

\keywords{generic properties, hyperbolic periodic point, homoclinic points,
partial hyperbolicity, center foliation}

\begin{abstract}
We consider a generic symplectic partially hyperbolic diffeomorphism close to
direct/skew products of symplectic Anosov diffeomorphisms with area-preserving diffeomorphisms
and prove that  every hyperbolic periodic point has transverse homoclinic points.
\end{abstract}

\maketitle

\section{Introduction}

Let $f \colon  M\rightarrow M$ be a diffeomorphism on a closed manifold $M$, 
$p$ be a hyperbolic periodic point of $f$,
and $W^{s,u}(p)$  be the stable and  unstable manifolds of $p$, respectively.
A point $x\in W^s(p)\cap W^u(p)\backslash \{p\}$ is called a  homoclinic point of $p$,
and the intersection $W^s(p)\cap W^u(p)$ at a homoclinic point $x$ is said to be   transverse
if $T_x W^s(p) + T_x W^u(p)=T_x M$. 
The fundamental importance of transverse homoclinic points  was first uncovered by Poincar\'e 
in the study of the restricted three-body problem \cite{Poin90, Poin}.
It is further revealed by Birkhoff \cite{Bir35} that there exist infinitely many
hyperbolic periodic points accumulating on any transverse homoclinic point.
Smale introduced in \cite{Sma65} a geometric model, 
now called  Smale horseshoe, for the dynamics around 
a transverse homoclinic point, and started
a systematic study of  general hyperbolic sets.

Xia and Zhang proved an interesting result in \cite{XZ06}  that periodic points are dense 
for a $C^r$-generic symplectic partially hyperbolic diffeomorphism 
close to a direct product of a symplectic Anosov diffeomorphism 
with an area-preserve diffeomorphism.
In this paper we obtain the existence of homoclinic points of hyperbolic periodic points of such systems.
Let $(N, \omega)$ be a closed symplectic manifold, $S$ be a closed surface with an area-form $\mu$.
Then $\Omega=\omega\oplus\mu$ is a symplectic form on the product manifold $M=N\times S$.
Let $f \colon  N\to N$ be a symplectic Anosov diffeomorphism,
$g \colon S \to S$ be an area-preserving diffeomorphism such that the direct product $f\times g$ is partially hyperbolic with center bundle $E^c_{(x,s)}=\{0_{x}\}\times T_{s}S$.
Replacing $f$ by $f^n$ for a larger $n$ if necessary, 
we may assume $f\times g$ is $4$-normally hyperbolic on $M$ (see Section \ref{ss.pahy} for more details).
Then there exists a $C^1$ open neighborhood $\cU$ of $f\times g$  
such that each map $\Phi \in\cU$ is partially hyperbolic, $4$-normally hyperbolic, dynamically coherent 
and plaque expansive (see Section \ref{ss.dcpe}). 
Moreover, the center foliation $\cF^c_\Phi$
is leaf conjugate to the trivial foliation $\cF^c_{f\times g} =\{\{x\}\times S: x\in N\}$.
Therefore, the center leaf $\cF^c_{\Phi}(p)$ is diffeomorphic to the surface $S$ for each $p\in M$.
Our first result is
\begin{thm}\label{thm:prod}
Suppose $r\ge 1$, $f\colon  N\to N$ be a  $C^r$ symplectic Anosov diffeomorphism,
$g \colon  S \to S$ an area-preserving diffeomorphism such that $f\times g$ is partially hyperbolic and $4$-normally hyperbolic.
Then there is a $C^1$-open neighborhood $\cU\subset\Diff^r_{\Omega}(M)$ of $f\times g$ 
such that for a $C^r$-generic map $\Phi \in \cU$, 
every hyperbolic periodic point of $\Phi$ admits transverse homoclinic points. 
\end{thm} 

Recall that a property (P) holds $C^r$-generically in a given $C^1$-open subset 
$\cU\subset\Diff^r_{\Omega}(M)$   if there is a $C^r$-residual subset 
$\cR\subset \cU$ such that every $f \in \cR$ satisfies the property (P).

More generally, let $f \colon  N\to N$ be a symplectic Anosov diffeomorphism,
and $g \colon  N \to \Diff^r_{\mu}(S)$ be a $C^r$ smooth cocycle.
This induces a skew product on the manifold $M= N\times S$ by $(f, g): M\to M$, $(x,s)\mapsto(f(x), g(x)(s))$.
Replacing $f$ by $f^n$ for large enough $n$ if necessary, 
we may assume $(f, g)$ is partially hyperbolic and $4$-normally  hyperbolic.
Our main result is
\begin{thm}\label{main}
Suppose $r\ge 1$,  $f \colon  N\to N$ be a  $C^r$ symplectic Anosov diffeomorphism,
$g \colon  N \to \Diff^r_{\mu}(S)$ be a $C^r$ smooth cocycle
such that the skew-product $(f, g)$ is partially hyperbolic and $4$-normally hyperbolic.
Then there is a $C^1$-open neighborhood $\cU \subset\Diff^r_{\Omega}(M)$ of $(f, g)$ 
such that for a $C^r$-generic map $\Phi\in\cU$, 
every hyperbolic periodic point of $\Phi$ admits transverse homoclinic points. 
\end{thm}

Our results are related to the long-standing conjecture of Poincar\'e  \cite{Poin}
(Chapter 3 in Vol. 1 and Chapter 33 in Vol. 3).
\begin{con}
Let $(M, \omega)$ be a closed symplectic manifold
and $\mathrm{Diff}^r_\omega(M)$ be the set of $C^r$ symplectic diffeomorphisms on $M$.
Then the following hold for a generic $f\in\mathrm{Diff}^r_\omega(M)$:
\begin{enumerate}
\item[(P1)]  The set of periodic points of $f$ is  \emph{dense} in the space $M$.

\item[(P2)]  There are transverse homoclinic points for every hyperbolic periodic point of $f$.
\end{enumerate}
\end{con}

The above conjecture is closely related to the  \emph{Closing  Lemma} and  \emph{Connecting Lemma}.
See \cite{Pug, PuRo} for the proof of $C^1$ Closing Lemma,
and \cite{Hay} for the proof of  $C^1$ Connecting Lemma.
Among $C^1$ diffeomorphisms, (P1) was proved by Pugh \cite{Pug2},
(P2) was proved by Takens \cite{Tak}, and a stronger version of (P2) was proved by Xia \cite{Xia96}. 
There are a few results for diffeomorphisms of higher regularity, most of which are  on surfaces.
More precisely, 
(P1) has been proved in \cite{CGPZ, EdHu},
(P2) has been proved on $S^2$ by Pixton \cite{Pix},
on $\bT^2$ by Oliveira \cite{Oli1}, and on surfaces of higher genus by Le Calvez and Sambarino \cite{LCS}.
See \cite{AsIr, FrLC, GaSh, KLCN, Mar16, QuXi, Xia06}  for results related to (P1), 
\cite{Oli2, Rob73, Xia2} for results related to (P2)
and  \cite{CoOl22, Irie15, XZ14, XZ17, Zha} for related results on dynamical systems of geometric origin.

\noindent{\bf Organization of the paper.}
In Section \ref{sec:pre} we introduce definitions and preliminary results.
In Section \ref{sec.pert} we construct a perturbation to change the center-leaf twist coefficient 
for a nonhyperbolic periodic point.
The proof of Theorem \ref{main} is given in Section \ref{sec.main}. 
It is clear that Theorem \ref{thm:prod} is a special case of Theorem \ref{main}.

\vskip.2in

\section{Preliminaries}\label{sec:pre}

We introduce necessary definitions and preliminary results that will be needed later.

\subsection{Partial hyperbolicity}\label{ss.pahy}
Let $M$ be a closed manifold and $\Diff^r(M)$ be the set of $C^r$ diffeomorphisms on $M$.
There are many different versions of definitions for a map $f\in \Diff^r(M)$ being partially hyperbolic \cite{HPS}.
Our definition follows \cite{BW10}. See \cite{Gou} for results about dominated splittings.
\begin{definition}\label{de:ph}
A diffeomorphism $f : M\to M$ is said to be \emph{partially hyperbolic} 
if there exist  a $Df$-invariant splitting $TM=E^s\oplus E^c \oplus E^u$ 
and a Riemannian metric on $M$ for which we can choose 
four continuous positive functions $\nu$, $\hat \nu$, $\gamma$ and $\hat \gamma$ on $M$ 
with $\nu,\hat\nu<1$ and $\nu<\gamma<\hat\gamma^{-1}<\hat\nu^{-1}$, 
such that for any $x\in M$, for any unit vector $v\in T_xM$, 
\begin{alignat*}{3}
&\|D_x f(v)\|< \nu(x)  & \quad & \text{ if } v\in E^s_x, \\
\gamma(x)< &\|D_x f(v)\| < \hat\gamma(x)^{-1} &  \quad & \text{ if } v\in E^c_x,  \\
\hat\nu(x)^{-1}< &\|D_x f(v)\| &  \quad & \text{ if } v\in E^u_x.  
\end{alignat*}
The three subbundles $E^s$, $E^c$ and $E^u$ are called  the stable, the center and the unstable bundles, respectively.  
Moreover, $f$ is said to be \emph{Anosov} (or equivalently, uniformly hyperbolic) if $E^c=\{0\}$. 
\end{definition}

Let $\PH^r(M)$ be the set of $C^r$ partially hyperbolic diffeomorphisms on $M$.
Note that the stable bundle $E^s$ is {uniquely integrable}. Let $\cF^s$ be the
stable foliation of $f$, whose leaves $\cF^s(x)$ are $C^r$ immersed submanifolds.
The same holds for the unstable bundle $E^u$. Denote by $\cF^u$ the unstable foliation. 
However, the center bundle $E^c$ may be  non-integrable.

\begin{definition}\label{def.knh}
Let  $f\in\PH^r(M)$, and $\nu,\gamma, \hat\nu$ and $\hat\gamma$ be the functions 
given in Definition \ref{de:ph}. 
Then $f$ is said to be {\it $k$-normally hyperbolic}
if  $\nu<\gamma^{k}$ and $\hat\nu<\hat\gamma^{k}$.
\end{definition}
It follows from Definition \ref{de:ph} that every partially hyperbolic diffeomorphism is 
$k$-normally hyperbolic for some $k \ge 1$.

\subsection{Dynamical coherence and plaque expansiveness}\label{ss.dcpe}
A diffeomorphism $f \in \PH^r(M)$ is said to be \emph{dynamically coherent}
if the subbundles $E^c$, $E^{c}\oplus E^{s}$ and  $E^{c}\oplus E^{u}$ integrate to invariant foliations
$\cF^c$, $\cF^{cs}$ and $\cF^{cu}$ respectively,  $\cF^c$ and $\cF^{s}$ subfoliate  $\cF^{cs}$,
$\cF^c$ and $\cF^{u}$ subfoliate  $\cF^{cu}$. 
Note that there are different versions of definitions of dynamical coherence in the
literature. See \cite{BW08} for more details.

\begin{pro}[\cite{HPS, Pes04}]\label{pro.leaf.Ck}
Let $1\le k\le r$, $f\in\PH^r(M)$ be dynamically coherent and $k$-normally hyperbolic.
Then $\cF^c(p)$ is a $C^k$ smooth submanifold for each $p\in M$.
\end{pro}

Hirsh, Pugh and Shub \cite[\S 7]{HPS} introduced the property \emph{plaque expansiveness}
for dynamically coherent partially hyperbolic maps. Here we follow the definition given in \cite{PSW12}.
More precisely, given a $c$-dimensional foliation $\cF$ on a closed $d$-dimensional manifold $M$,
one can pick finitely many foliation boxes for $\cF$, say $\phi_i: D^c \times D^{d-c} \to M$, $1\le i\le I$, such that the corresponding half size foliation boxes $\phi_i(\frac{1}{2}D^c \times \frac{1}{2}D^{n-c})$, $1\le i\le I$,  cover $M$. Each piece $\phi_i(D^c\times \{y\})$ is called a plaque of $\cF$,
and together they form a plaquation  $\cP$ of the foliation $\cF$ which cover the leaves of $\cF$ in a uniform fashion.

Let  $f\in \PH^r(M)$ be dynamically coherent.
A sequence $(x_n)_{n\in\bZ}$ is said to be an $\eps$-pseudo orbit if $d(f(x_n), x_{n+1})<\eps$ for each $n\in\bZ$, and is said to respect the plaquation $\cP$ if for each $n\in\bZ$ there exists a plaque $P_n \in \cP$ containing both $f(x_n)$ and $x_{n+1}$.
Then $f$ is said to be {\it plaque expansive} 
if there exist $\eps >0$ and a plaquation $\cP$ of the center foliation $\cF^c_f$ such that 
for any two $\eps$-pseudo orbits $(p_n)_{n\in\bZ}$ and $(q_n)_{n\in\bZ}$ that respect $\cP$,
if $d(p_n, q_n) < \eps$ for all $n\in\bZ$,
then  for each $n\in\bZ$ there exists a plaque $P_n \in \cP$ containing both $p_n$ and $q_n$.
Plaque expansiveness can be viewed as a generalization of the expansiveness 
from hyperbolic systems to partially hyperbolic ones.

\begin{pro}\label{pro.plaque} \cite[Theorem 7.1]{HPS}
Suppose $f\in\PH^1(M)$ is dynamically coherent and  plaque expansive.
Then there is a $C^1$-neighborhood $\cU \subset \PH^1(M)$ of $f$ such that every map $g \in \cU$ is dynamically coherent and plaque expansive,
and $(g, \cF^c_g)$ is canonically leaf conjugate to $(f, \cF^c_f)$.
\end{pro}
See also Theorem 1 and Theorem A in \cite{PSW12}.
Recall that $(g, \cF^c_g)$ is canonically leaf conjugate to $(f, \cF^c_f)$ if there is a homeomorphism $h: M\to M$ such that $h(\cF^c_g(x)) = \cF^c_f(h(x))$ and  $h(\cF^c_g(gx)) = \cF^c_f(h(fx))$ for every $x\in M$.

Plaque expansiveness is a desirable property but is not easy to be verified directly.
There are some simple and sufficient conditions for plaque expansiveness. See \cite{PSW12} for more details.
\begin{pro}\label{pro.C1}\cite[Theorem 7.2]{HPS}
Suppose $f\in\PH^1(M)$  admits an invairant center foliation $\cF^c_f$. 
If  $\cF^c_f$ is a $C^1$ foliation, then $f$ is plaque expansive.
\end{pro}

\subsection{Symplectic diffeomorphisms}\label{ss.sydi}
A 2-form $\omega$ on an even-dimensional manifold $M$ is said to be \emph{symplectic} 
if it is nondegenerate  and closed.  A symplectic manifold is a pair $(M, \omega)$ where $M$ is a smooth manifold and $\omega$ is a symplectic form on $M$.
A map $f\in \Diff^r(M)$ is said to be symplectic if  $f^\ast\omega=\omega$.
Let $\Diff^r_\omega(M) \subset \Diff^r(M)$ be the set of symplectic diffeomorphisms on $M$.
For convenience, let $d_{C^r}(f, g)$ be the $C^r$-distance  between two diffeomorphisms $f, g \in \Diff^r_{\omega}(M)$
and $B_{C^r}(f, \eps)$ be the $\eps$-ball of diffeomorphisms $g \in \Diff^r_{\omega}(M)$ with $d_{C^r}(f, g)<\eps$.
The following result is proved by Zehnder \cite[Theorem 1]{Zeh}.
\begin{pro}\label{pro.Zeh}
Let $(M, \omega)$ be a symplectic manifold. Then the set $\Diff^{\infty}_{\omega}(M)$
is dense in $\Diff^{r}_{\omega}(M)$ for each $r\ge 1$.
\end{pro}

Let $E\subset TM$ be a continuous subbundle with  $\dim E=i$, that is, $\dim(E_x)=i$ for any $x\in M$. 
The \emph{symplectic complement}  of $E$, denoted by $E^\omega$, is a subbundle  of $TM$ with fiber
$E^\omega_x=\{v\in T_xM: \omega(v,w)=0 \text{ for any }w\in E_x\}$.
Clearly $\dim E^\omega=\dim M - \dim E$. 
A subbundle $E\subset TM$ is said to be \emph{isotropic} if $E\subset E^\omega$,  
is said to be \emph{coisotropic} if $E\supset E^\omega$,  
is said to be \emph{symplectic} if $E\cap E^\omega=0$, 
and  is said to be \emph{Lagrangian} if $E= E^\omega$.
A submanifold $S\subset M$ is said to be a symplectic submanifold if $T_x S$ is a symplectic subspace of $T_x M$ for every $x\in S$. In this case, the restriction $\omega|_{S}$ serves as the symplectic form on $S$.

\subsection{Symplectic partially hyperbolic systems}\label{sec:symp}
Let $(M, \omega)$ be a symplectic manifold, $\PH^r_\omega(M):= \Diff^r_\omega(M) \cap \PH^r(M)$
be the set of symplectic partially hyperbolic diffeomorphisms on $M$.
Note that there might be different ways of formulating the partially hyperbolic splitting of  a map $f\in \PH^r_\omega(M)$, see \cite[Section 3]{BW08} for some interesting examples.
As we will see below, the center bundle $E^c_f$ can always be chosen to be a symplectic subbundle of $TM$.

\begin{pro}[\cite{SX06}]  \label{pro.symp.dom}
Suppose $f\in\Diff^r_\omega(M)$ admits  a dominated splitting $TM=E\oplus F$ with $\dim E \le \dim F$.
Then $f$ is partially hyperbolic with $E^s=E$, $E^c=E^\omega\cap F$
and $E^u= (E^c)^\omega\cap F$.
Moreover, $E^s$ and $E^u$ are isotropic, 
$E^s\oplus E^u$ and $E^c$ are symplectic and are symplectic-complement to each other.
\end{pro}

From now on, the center bundle $E^c$ of a map $f\in \PH^r_\omega(M)$ is \emph{always} assumed to be symplectic.

\begin{remark}
It is proved in \cite{SX06} that symplectic partially hyperbolic
maps are symmetric. That is,
one can take $\hat\nu=\nu$ and $\hat\gamma=\gamma$ in Definition \ref{de:ph}.
Then the normal hyperbolicity condition in Definition \ref{def.knh}
for general partially hyperbolic maps
admits a simpler form in the symplectic case. 
That is, a map $f\in\PH^r_\omega(M)$ is $k$-normally hyperbolic 
if the functions $\nu$ and $\gamma$ in Definition \ref{de:ph} satisfy $\nu<\gamma^{k}$.
\end{remark}

\begin{pro}[\cite{XZ06}] \label{pro.symp.center}
Suppose $f\in \mathrm{PH}^r_\omega(M)$ is dynamically coherent. 
Then each center leaf $\cF^c(x)$ is a symplectic submanifold of $M$
with respect to the restricted symplectic form $\omega|_{\cF^c_f(x)}$.
Moreover, the restriction $f \colon  \cF^c_f(x)\to \cF^c_f(fx)$
is a symplectic diffeomorphism for every $x\in M$.
\end{pro}

A center leaf $\cF^c(x)$ is said to be periodic if $f^k\cF^c(x) =\cF^c(x)$ for some $k\ge 1$.
In  \cite{NT01} Nitic\v{a} and T\"or\"ok  proved the following.
\begin{pro}\label{pro.denseCL}
Suppose $f \in\PH^r_{\omega}(M)$ is dynamically coherent and plaque expansive. 
Then the periodic center leaves of  $\cF^c$ are dense in $M$.
\end{pro}

\subsection{Birkhoff normal form and nonlinear stability}\label{BNF}
Let $S$ be a closed surface, $\mu$ be an area form on $S$, 
$f \colon  S \to S$ be a $C^4$ symplectic map, 
and $p$ be an elliptic fixed point of $f$, that is, 
That is, $|\lambda_p|=1$ and $\lambda_p \neq \pm 1$, where $\lambda_p$ is an eigenvalue of the linear map $D_p f \colon  T_p S \to T_p S$.
Recall that an elliptic fixed point $p$ is said to be \emph{nonresonant} if $\lambda_p^j\neq1$ for each $1\le j\le 4$.
Birkhoff \cite{Bir22} showed that
there exist a unique real number $\tau_1$ and a symplectic embedding $h: U\to S$ on a neighborhood $U$ of $0\in \bC$ with $h(0) = p\in S$ such that 
\begin{align}
h^{-1}\circ f\circ h(z) = \lambda_p\cdot z\cdot e^{i \cdot \tau_1|z|^2} + O(|z|^{4}).\label{normal}
\end{align}
See also \cite[Theorem 2.12]{Mos77}. 
The number $\tau_1=\tau_1(f, p)$ is called the first twist coefficient of $f$ around the fixed point $p$,
the map $h$ is called the first-order Birkhoff normalization,
and the map of the form \eqref{normal} is called the first-order Birkhoff Normal Form of $f$ at $p$.

\begin{definition} 
An elliptic fixed point $p$ of a surface map
$f \colon  S\to S$ is said to be {\it nonlinearly stable},
if there is a sequence of nesting neighborhoods $\{D_n: n\ge 1\}$ of $p$
such that for each $n\ge 1$, $f(D_n)=D_n$ and the restriction of $f$
on $\partial D_n\simeq S^1$ is a transitive circle map.
\end{definition}

Note that nonlinearly stable periodic points are isolated from the dynamics
in the sense that it cannot be reached from any invariant curve 
whose starting point lies outside some $D_n$.
The following is Moser's \emph{Twisting Mapping Theorem} \cite{Mos62}. 
See also \cite[Theorem 2.13]{Mos77}. 
\begin{thm}\label{Moser}
Let $r\ge 4$, $f \in \Diff_{\mu}^{r}(S)$ and $p$ be a nonresonant elliptic fixed point of $f$.
If the first twist coefficient of $f$ at $p$ is nonzero, then $p$ is nonlinearly stable.
\end{thm}

\subsection{Homoclinic intersections for surface diffeomorphisms}\label{ss.gene}
Let $S$ be a closed surface of genus $g_S$, $\mu$ be an area form on $S$,
and $\cG^r_{\mu}(S) \subset \Diff^r_{\mu}(S)$ be the set of $C^r$ symplectic diffeomorphisms 
$f : S \to S$ satisfying the following conditions:
\begin{enumerate}
\item[(G1)]  Every periodic point of $f$ is either elliptic or hyperbolic.
 Moreover, if $p$ is an elliptic
periodic point of period $n$, then the eigenvalues of $Df^n(p)$ are not roots of unity.

\item[(G2)] Stable and unstable branches of hyperbolic points that intersect must also intersect
transversally.

\item[(G3)] Every elliptic periodic point of $f$ is nonlinearly stable.
\end{enumerate}

The following alternative condition for (G3) is used in \cite{LCS}:
\begin{enumerate}
\item[(G3)'] Every elliptic periodic point of $f$ admits a nesting sequence of topological disks
whose boundaries consist of finitely many pieces of stable and unstable manifolds of some hyperbolic periodic points.
\end{enumerate}
Both (G3) and (G3)' are sufficient when applying Mather's  prime-end theory \cite{Mat81, Mat82}.
The main difference between (G3) and (G3)' is that (G3)' is a generic condition for $C^r$, $r\ge 1$,
while (G3) is a generic condition only for $r\ge 4$. Since the existence of transverse homoclinic points is a $C^1$-open property for one periodic point and a $G_{\delta}$-property for diffeomorphisms, we only need to prove the $C^r$-denseness of diffeomorphisms with transverse homoclinic points for every hyperbolic periodic points. Combining with Proposition \ref{pro.Zeh}, we have that the $C^r$-generic
existence of transverse homoclinic points implies the $C^k$-generic existence for any $r\ge k$.
Therefore, one can replace (G3)' in  \cite{LCS} by (G3) when defining the set $\cG^r_{\mu}(S)$.

\begin{pro}\label{homo01}
Let $S=S^2$ or $\bT^2$, $f\in \cG^r_{\mu}(S)$.
Then there  are transverse homoclinic points  for any hyperbolic periodic point of $f$.
\end{pro}
The case  $S=S^2$ is proved by Pixton \cite{Pix}, and the case $\bT^2$ is proved by Oliveira \cite{Oli1}. 
See also \cite[Theorem 1.5]{LCS}.

Now we consider a closed surface $S$ of genus $g_S \ge 2$ and a map $f \in \cG^r_{\mu}(S)$.
Let $P(f) \subset S$ be the set of periodic points of $f$, and $P_h(f)\subset P(f)$ be the set of hyperbolic ones.
Le Calvez and Sambarino \cite{LCS} obtained several important characterizations for such maps.
Here we only list a few that are needed in this paper. 
See  \cite[Proposition 1.4, Theorem 1.5 and 1.6]{LCS} for more details.
\begin{pro}\label{pro.LCS}
Let $S$ be a closed surface of genus $g_S \ge 2$, $f \in \cG^r_{\mu}(S)$. Then $|P_h(f)| \ge 2g_S-2$. 
Moreover, the following dichotomy holds:
\begin{enumerate}
\item $|P_h(f)|> 2g_S-2$: every hyperbolic periodic point of $f$ has transverse homoclinic points;

\item $|P_h(f)|=2g_S-2$:  every periodic point of $f$ is hyperbolic, and each stable (resp. unstable) branch of every hyperbolic periodic point is dense on $S$.
\end{enumerate} 
\end{pro}

\subsection{Symplectic partially hyperbolic systems with 2D center}\label{ss.sph}
Let $\mathrm{PH}^r_\omega(M,2)$ be the set of symplectic partially hyperbolic diffeomorphisms
with 2D center bundles. As we have mentioned right after Proposition \ref{pro.symp.dom},
the center bundles will be assumed to be symplectic. 
Given a map $f\in \mathrm{PH}^r_\omega(M,2)$ and
a periodic point $p$  of minimal period $n$,
the splitting $T_pM=E^s_p\oplus E^c_p\oplus E^u_p$ at $p$ is $D_pf^n$-invariant. 
It follows from Definition \ref{de:ph} that the eigenvalues of $D_pf^n$ along the subspace $E^s_p$ (resp. $E^u_p$)
have modulus smaller (resp. larger) than $1$.
Moreover, it follows from Proposition \ref{pro.symp.center} that the two eigenvalues of $D_pf^n$ along the 2D symplectic subspace $E^c_p$ are of the from $\lambda_c(p, f^n)$ and $\lambda_c(p, f^n)^{-1}$.
Therefore, we have the following  dichotomy:
\begin{enumerate}
\item either $|\lambda_c(p, f^n)|\neq 1$: then $p$ is a hyperbolic periodic point of $f$; 

\item  or  $|\lambda_c(p, f^n)|=1$: then  $p$ is nonhyperbolic with a 2D neutral subspace.
\end{enumerate}
The stable manifold $W^s(p)$ of a periodic point $p$ of period $n$ (not necessarily hyperbolic)
is defined to be the $f^n$-invariant submanifold tangent to the generalized eigenspace of eigenvalues $\lambda$
of $D_p f^n$ with $|\lambda|<1$. It coincides with the stable leaf $\cF^s(p)$ when $p$ is nonhyperbolic
and strictly contains  the stable leaf $\cF^s(p)$ when $p$ is a hyperbolic periodic point. 
Note that   $W^s(p)$ may be thin along the center direction.
Given a positive number $\delta >0$, we can define the stable disk $W^s(p, \delta)$ centered at $p$ of radius $\delta$
with respect to the induced submanifold metric on $W^s(p)$.
Similarly one can define the unstable manifold $W^u(p)$ and the unstable disk $W^u(p, \delta)$.

\subsection{Kupka--Smale property}
Robinson \cite{Rob70} extended the Kupka--Smale property to symplectic diffeomorphisms.
For convenience, we will restrict to $\mathrm{PH}^r_\omega(M,2)$, which is an open subset of $\Diff_{\omega}^r(M)$.
Let $f\in \mathrm{PH}^r_\omega(M,2)$ and $p$ be a nonhyperbolic periodic point of minimal period $n$, that is, $|\lambda_c(p, f^n)|=1$, see \S~\ref{ss.sph}.
Then $p$ is said to be \emph{center-elliptic} if $\lambda_c(p, f^n)  \neq \pm 1$,
and is \emph{center-nonresonant} if  $\lambda_c(p, f^n)^k \neq 1$ for each $1\le k \le 4$. 
This generalizes of the definition of (nonresonant) elliptic periodic points given in Section \ref{BNF}. 
For each $n\ge 1$, let $P_n(f)$ be the set of points fixed by $f^n$.
Clearly $P_n(f)$ is a closed set. 
Robinson proved in \cite{Rob70} the following
\begin{pro}\label{rob-ks}
There exists a  $C^1$-open and $C^r$-dense subset $\cU_n^r\subset \mathrm{PH}^r_\omega(M, 2)$  such that 
for each $f\in \cU_n^r$, 
\begin{enumerate}
\item $P_n(f)$ is finite and varies continuously;

\item  each periodic point in $P_n(f)$ is either hyperbolic or center-nonresonant;

\item $W^u_f(p,n)\pitchfork W^s_f(q,n)$ (possibly empty) for any  $p, q\in P_n(f)$.
\end{enumerate}
\end{pro}

Let $\cR_{KS}(2)=\bigcap_{n\ge 1} \cU^r_n$, which is a $C^r$-residual subset
of $\mathrm{PH}^r_\omega(M, 2)$. It follows that 
\begin{cor}\label{rob-cor}
Let $f\in \cR_{KS}(2)$. Then the following hold:
\begin{enumerate}
\item the set of periodic points $P(f)$ is countable;

\item each periodic point of $f$ is either hyperbolic or center-elliptic.
Moreover, if $p$ is a center-elliptic
periodic point of period $n$, then the center-eigenvalues of $Df^n(p)$ are not roots of unity;

\item $W^u_f(p)\pitchfork W^s_f(q)$ (possibly empty) for any periodic points $p$ and $q$ of $f$.
\end{enumerate}
\end{cor}

\begin{remark}
The last item of the above Kupka--Smale property says  that, 
{\it when} $W^s_f(p)$ and $W^u_f(q)$ have a 
nontrivial intersection, the intersection is actually transverse.
However, it does not address the question
whether $W^s_f(p)$ and $W^u_f(q)$ can have any nontrivial intersection.
Theorem \ref{main} states that there are homoclinic intersections 
for every hyperbolic periodic point generically.
\end{remark}

\section{Perturbations of the center-leaf twist coefficients}\label{sec.pert}

Let $(M, \omega)$ be a closed symplectic manifold, $1\le k\le r$,
and $\cN^r_{k}(2) \subset \mathrm{PH}^r_\omega(M,2)$ be the set of symplectic partially hyperbolic diffeomorphisms on $M$
that are $k$-normally hyperbolic, dynamically coherent and plaque expansive. 
It follows from Definition \ref{def.knh} and Proposition \ref{pro.plaque}
that $\cN^r_{k}(2)$ is a $C^1$-open subset of $\mathrm{PH}^r_\omega(M,2)$.
Moreover, it follows from Proposition \ref{pro.leaf.Ck}, \ref{pro.symp.dom} and \ref{pro.symp.center}
that for each $f\in \cN^r_{k}(2)$, the center leaf $\cF^c_f(p)$ is a $C^k$ symplectic submanifold with symplectic form $\omega|_{\cF^c_f(p)}$, and the restriction $f \colon  (\cF^c_f(p), \omega|_{\cF^c_f(p)}) \to  (\cF^c_f(fp), \omega|_{\cF^c_f(fp)})$  is a $C^k$ symplectic diffeomorphism  for each $p\in M$. 
In order to apply Proposition \ref{homo01} and \ref{pro.LCS} to these center-leaf mappings, we need to show that $C^r$-generically, the conditions (G1)--(G3) hold for all periodic center leaves. Note that (G1) and (G2) have been established in Proposition \ref{rob-ks}, see also Corollary \ref{rob-cor}. For (G3),  it follows from Theorem \ref{Moser} that we only need to check the center-leaf twist coefficient $\tau_1\neq 0$ for nonhyperbolic periodic orbits.

\begin{pro}\label{pro:bir}
Suppose $r\ge 4$. 
Then there exists a  $C^4$-open and $C^r$-dense subset $\cV_n\subset \cN^r_{4}(2)$ 
such that for each $f\in \cV_n$ and each periodic point $p\in P_n(f)$, either $p$ is hyperbolic, 
or the center-leaf twist coefficient $\tau_1(p,f^k,\cF^c_f(p))\neq 0$,
where $k$ is the minimal period of the point $p$.
\end{pro}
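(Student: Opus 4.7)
The plan is to construct $\cV_n$ as an open dense refinement of the Kupka--Smale set provided by Proposition \ref{rob-ks}, on which the Moser Twisting Mapping Theorem (Theorem \ref{moser}) applies at every nonhyperbolic periodic point in $P_n(f)$ and delivers a nonzero Birkhoff coefficient. First I would intersect $\cU_n^r$ with $\cN^r(2)$ to obtain an open dense subset on which $P_n(f)$ is finite and varies continuously with $f$, and each $p\in P_n(f)$ is $n$-elementary. For a nonhyperbolic $p$ of minimal period $k\mid n$, the two center eigenvalues of $D_pf^k$ form a conjugate pair $\beta,\beta^{-1}$ on the unit circle. I would then shrink to the open dense $\cV_n^0$ on which additionally $\beta^j\ne 1$ for $1\le j\le 4$ at every nonhyperbolic $p$; for $n\ge 4$ this already follows from $n$-elementarity by a short divisibility check using $\alpha=\beta^{n/k}$, and for small $n$ it is enforced by a local symplectic perturbation supported near $p$ that rotates $\beta$ along the unit circle off the four low-order roots of unity.

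On $\cV_n^0$ the hypotheses of Proposition \ref{mos1} are met by the center-leaf return map $f^k\colon \cF^c_f(p)\to\cF^c_f(p)$, and since $r$-normal hyperbolicity together with stable integrability make the leaf $\cF^c_f(p)$ and its induced symplectic form vary continuously with $f$, the Birkhoff coefficient $a_1(p,f^k,\cF^c_f(p))$ depends continuously on $f$. Hence the subset $\cV_n\subset\cV_n^0$ defined by $a_1\ne 0$ at every nonhyperbolic $p\in P_n(f)$ is open. For density I would fix $f\in\cV_n^0$ and a nonhyperbolic $p$ with $a_1=0$, and produce an arbitrarily $C^r$-small symplectic perturbation $g=\Phi_H\circ f$ inside $\cN^r(2)$ with nonzero Birkhoff coefficient at the continuation of $p$. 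The perturbation $\Phi_H$ is the time-one map of a compactly supported Hamiltonian $H$ on a small Darboux chart around $p$ adapted to the skew-orthogonal splitting $T_pM=(E^s_p\oplus E^u_p)\oplus E^c_p$, with support disjoint from the rest of the orbit and with $X_H$ tangent to the center slice through $p$; the restriction of $H$ to that slice is taken to be a cutoff multiple of $|z|^4$, which after one iteration adds a prescribed cubic term to the Taylor expansion of the return map at $p$ and thereby shifts $a_1$ by any prescribed amount. Openness of $\cN^r(2)$ keeps $g$ in this class, and handling the finitely many nonhyperbolic orbits in $P_n(f)$ one after another gives density of $\cV_n$.

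The main obstacle will be the last construction: one needs an adapted Darboux chart in which the center slice through $p$ is a symplectic submanifold tangent to $E^c_p$, a cutoff scheme keeping $H$ supported in the chart while leaving $E^c$-tangency at $p$ intact, and an explicit accounting at the level of the Birkhoff normal form of how an added quartic Hamiltonian on the center slice shifts the cubic terms of the return map (and therefore $a_1$) by a controlled amount. This is essentially the generating-function step alluded to in the outline of the proof of Theorem \ref{main}; once it is in place, the remainder is Kupka--Smale bookkeeping and continuity.
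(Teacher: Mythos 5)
Your proposal is correct and follows essentially the same route as the paper: restrict to Robinson's Kupka--Smale set intersected with $\cN^r(2)$, use continuity of the center-leaf Birkhoff coefficient for openness, and for density compose $f$ with a $C^r$-small local symplectic perturbation supported near $p$ (and away from the rest of the orbit) whose restriction to the center leaf is a quartic twist shifting $a_1$ off zero, handling the finitely many nonhyperbolic orbits inductively --- the paper realizes this perturbation by lifting a leaf perturbation via a generating function, which is interchangeable with your Hamiltonian time-one map. The one point to nail down is that your ``center slice'' must be the actual local leaf $\cF^c_f(p)$ flattened in the adapted Darboux chart (not merely a surface tangent to $E^c_p$ at $p$), so that the perturbed map preserves this leaf and persistence/uniqueness of the $r$-normally hyperbolic invariant manifold gives $\cF^c_g(p)=\cF^c_f(p)$, guaranteeing that the coefficient you shifted is indeed $a_1(p,g^k,\cF^c_g(p))$.
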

\begin{proof}
Let $\cU^r_n(2)=\cN^r_{4}(2)\cap \cU_n^r$,
where $\cU_n^r$ is the $C^1$-open and $C^r$-dense subset of $\mathrm{PH}^r_\omega(M,2)$ given in Proposition \ref{rob-ks}.
Let $f\in\cU^r_n(2)$, and $p\in P_n(f)$ be a nonhyperbolic periodic point, 
and $k$ be the minimal period of $p$. Then $k|n$.

Since $f\in \cU^r_n$, the periodic point $p$ is center-nonresonant.
Let $h_c: U_c \to \cF^c_f(p)$ be the symplectic embedding of an open set $U_c\subset \bC$ given in Section \ref{BNF}
such that 
\begin{align}
h_c^{-1}\circ f^k|_{\cF^c_f(p)} \circ h_c(z) =\lambda_p z e^{i \tau_1 |z|^2} + O(|z|^4),
\end{align}
where $\tau_1 =\tau_1(p,f^k,\cF^c_f(p))$ be the first twist coefficient of the center-leaf map $f^k|_{\cF^c_f(p)}$ at $p$.

\noindent{\bf Claim.}
Let $\cU_f \subset \cU^r_n$ be a  $C^4$-open neighborhood of $f$
such that $P_n(\cdot)$ is a finite subset of the same cardinalty and varies continuously on $\cU_f$.
If $\tau_1(p,f^k,\cF^c_f(p))\neq 0$, then there exists a $C^4$-open neighborhood
$\cU(f,p)\subset \cU_f$ of $f$ such that $\tau_1(p_g,g^k,\cF^c_g(p_g))\neq 0$ for all $g\in\cU(f,p)$.

\begin{proof}[Proof of Claim]
Note that the periodic point $p$ is nondegenerate. Let $p_g$ be the continuation of $p$
for a map $g$ that is close to $f$.
Moreover, the partially hyperbolic splitting of the map $g$ varies continuously,
and $g$ admits a $g$-invariant center foliation $\cF^c_g$. 
Therefore,  the map $g\mapsto(g^k,\cF^c_g(p_g))$ varies continuously, 
so is the first twist coefficient $g\mapsto \tau_1(p_g,g^k,\cF^c_g(p_g))$.
This completes the proof of Claim.
\end{proof}

In the following we consider the case that  $\tau_1(p,f^k,\cF^c_f(p))=0$. 
We will add a small positive twist to the Birkhoff normal form on a small neighborhood of the center leaf at $p$.
More precisely, let $\eps$ and $\delta$ be two small positive numbers (to be specified later),
$b:[0, \infty) \to [0, 1]$ be a smooth bump function with $b(t)=1$ for $t \le 1/3$ and $b(t)=0$ for $t\ge 2/3$,
and  $\hat g_c$ an integrable twist map on an open ball $B_c(0, \eps) \subset U_c$  given by
\begin{align}
\hat g_c(z) = z e^{i \delta b(|z|/\eps)|z|^2}. \label{eq.add.tw}
\end{align}
Note that $\hat g_c(0)=0$, $\hat g_c(z)=z$ when $|z|\ge 2\eps/3$, 
and the $C^r$-norm of $\hat g_c - Id$ can be made arbitrarily small by reducing the parameter $\delta$.
Then consider the map $g_c: U_c \to U_c$ defined by $g_c=h_c\circ \hat g_c \circ h_c^{-1}$.
Note that $g_c$ is symplectic since both $h_c$ and $\hat g_c$ are symplectic.
Then it is easy to see that the Birkhoff coefficient $\tau_1(p;f^{k}\circ g_c,\cF^c_f(p)) =\delta\, b(0) >0$. 
Note that $k$ is the period of $p$, not necessarily the period of the center leaf $\cF^c_f(p)$.
In particular, it is possible that $f^j\cF^c_f(p)=\cF^c_f(p)$ for some $j|k$.
In this case, the intersection $\cO(p,f)\cap \cF^c_f(p)$ is a finite set,
and the support of $g_c$ can be made small enough such that it does not interfere
with the intermediate returns of $p$ to $\cF^c_f(p)$.
Note that the map $g_c$ has yet to be defined on $M\backslash \cF^c_f(p)$.

Next we will extend $g_c$ to the whole manifold $M$.
By Darboux's theorem \cite{MS17},
one can extend the local coordinate system $(x_1, y_1)$ on $U_c\subset \cF^c_f(p)$
to a local neighborhood  $U\subset M$ containing $U_c$, 
say $(x_i, y_i)_{1\le i \le d}$,
such that $p=(0,0,\dots, 0)$ and $\omega=\sum_i dx_i\wedge dy_i$, where $1\le i \le d$.
Suppose $g_c(x_1, y_1)=(X_1(x_1, y_1), Y_1(x_1, y_1))$, $(x_1, y_1) \in U_c$. 
It follows from the definition \eqref{eq.add.tw} that 
the support of the map $g_c$ is contained in the ball $B_c(0, \eps) \subset U_c$.
Note that both $h_c$ and $\hat g_c$ are close to identity, so is $g_c$.
It follows from \cite[Lemma 9.2.1]{MS17} that there exists a $C^{r+1}$-small function $V_c(X_1, y_1)$ supported on $B_c(0, \eps) \subset U_c$ such that 
$g_c(x_1, y_1)=(X_1, Y_1)$ if and only if 
\begin{align}
X_1 -x_1 =\frac{\pa V_c}{\pa y_1}(X_1, y_1), \quad Y_1 -y_1 = -\frac{\pa V_c}{\pa X_1}(X_1, y_1). \label{eq.generating}
\end{align}
Then we extend the above function $V_c$  to 
a $C^{r+1}$-small function $V$ supported on a small ball $B(0, \eps') \subset U$ with $V|_{U_c}=V_c$ 
(reducing $\eps$ and $\delta$ if necessary).
Let $g$ be the symplectic diffeomorphism on $U$ generated by the function $V$ using the vector form of the equation \eqref{eq.generating}: $g(x,y) =(X, Y)$ if and only if
\begin{align}
X_i -x_i =\frac{\pa V_c}{\pa y_i}(X, y), \quad Y_i -y_i = -\frac{\pa V_c}{\pa X_i}(X, y),
\quad 1\le i\le d. 
\end{align}
Note that $g$ is supported on $B(0, \eps') \subset U$. 
So we can extend $g$ to the whole manifold $M$ by setting  $g=Id$ on $M\backslash U$. 
It follows that $g$ is $C^r$-close to identity,
and $g=g_c$ on a small neighborhood of $p$ in $\cF^c_f(p)$. Let $\hat f=f\circ g$. 
Then we have 
 $\hat f^i(p)=f^i\circ g(p)=f^i (p)$ for each $1\le i\le k$, $\hat f^k(\cF^c_f(p))=\cF^c_f(p)$
and $\tau_1(p,\hat f^k, \cF^c_f(p))=\tau_1(p, f^{k}\circ h_c,\cF^c_f(p)) > 0$.
Note that an invariant normally hyperbolic manifold is isolated and persists under perturbations.
The fact $\cF^c_f(p)$ is a  normally hyperbolic manifold
of $\hat f^k$ implies that  $\cF^c_{\hat f}(p)=\cF^c_f(p)$.
Therefore, we can rewrite the above conclusion as $\tau_1(p,\hat f^k, \cF^c_{\hat f}(p)) > 0$.

As we have shown in the Claim, 
there is a $C^4$-open neighborhood $\cU(p, \hat f) \subset \cU$ of $\hat f$
such that for any $h\in\cU(p, \hat f)$, 
the continuation $p_h$ satisfies $\tau_1(p_h, h^k, \cF^c_h(p_h))\neq 0$.
Let $\ell=|P_n(\hat f)|$, which is constant on $\cU$.
Then by induction,
we can find a $C^4$-open subset $\cU^{(\ell)}_f \subset \cU(p, \hat f)$ arbitrarily $C^r$-close $f$, 
such that for each $h\in \cU^{(\ell)}_f$ 
and each periodic point $p\in P_n(h)$, either it is hyperbolic 
or  the center-leaf Birkhoff coefficient $\tau_1(p, h^k, \cF^c_h)\neq 0$,
where $k$ is the minimal period of the point $p$.

Note that the map $f$ is chosen arbitrarily in $\cU_{n}^r(2)$, 
and $\cU^{(\ell)}_f$ is a $C^4$-open set with elements $C^r$-close to $f$.
Putting these sets   $\cU^{(\ell)}_f$ together, we get a $C^4$-open
and  $C^r$-dense subset in $\cU_{n}^r(2)$, say $\cV_n$, such that for each $f\in \cV_n$
and each periodic point $p\in P_n(f)$, either $p$ is hyperbolic, 
or  $p$ is center-nonresonant and the center-leaf Birkhoff coefficient $\tau_1(p,f^k, \cF^c_f)\neq 0$,
where $k$ is the minimal period of $p$.
Then it follows that $\cV_n$ is a $C^4$-open 
and  $C^r$-dense subset of $\cN^r_4(2)$.
\end{proof}

\begin{pro}\label{pro:R}
Let $\cV_n$ be the  $C^4$-open and $C^r$-dense subset of $\cN^r_4(2)$ given in Proposition \ref{pro:bir},
and $\cR=\bigcap_n \cV_n$. Then $\cR$ contains a $C^r$-residual subset of $\cN^r_4(2)$ such that
for each $f\in\cR$,
\begin{enumerate}
\item $P_n(f)$ is finite, and each periodic point is either hyperbolic or 
center-nonresonant;

\item  $W^s(p)\pitchfork W^u(q)$ for any two hyperbolic periodic points $p,q$;

\item the center Birkhoff coefficient $\tau_1(p,f^k,\cF^c(p))\neq 0$ 
for each center-nonresonant periodic point $p$.
\end{enumerate}
\end{pro}

\section{Proof of the main theorem}\label{sec.main}

Let $(N, \omega)$ be a closed symplectic manifold, 
$f \colon N \to N$ be a symplectic Anosov diffeomorphism,
$S$ be a closed surface with an area form $\mu$,
and $g \colon  N\to \Diff^r_{\mu}(S)$ be a smooth cocycle such that the skew-product
$(f, g) \in \PH^r_{\Omega}(M)$ is $4$-normally hyperbolic, where $M=N\times S$
and $\Omega=\omega\oplus \mu$. Moreover, the center leaf $\cF^c_{(f, g)}(x, s)=\{x\}\times S$ 
for every $(x,s)\in M$.
It follows from Proposition \ref{pro.C1} that $(f, g)$ is dynamically coherent and plaque expansive.
Let $\cU_{\ast} \subset \PH^r_{\Omega}(M)$ be the $C^1$-neighborhood of $(f,g)$ given by Proposition \ref{pro.plaque}
such that every $\Phi \in \cU_{\ast}$ is $4$-normally hyperbolic, dynamically coherent and plaque expansive,
and $(\Phi, \cF^c_{\Phi})$ is canonically leaf conjugate to $((f,g), \cF^c_{(f, g)})$. In particular, 
$\cU_{\ast} \subset \cN^r_4(2)$, and
the leaves of the center foliation $\cF^c_{\Phi}$ are diffeomorphic to $S$.
Assume $r\ge 4$ for the moment.  
Then it follows from  Proposition \ref{pro.leaf.Ck} and \ref{pro.symp.center} that the center leaf $\cF^c_{\Phi}(x)$ is a $C^4$ symplectic  submanifold diffeomorphic to $S$,
and the restriction  $\Phi: \cF^c_{\Phi}(x) \to \cF^c_{\Phi}(\Phi x)$ is a $C^4$ symplectic diffeomorphism  for every $x\in M$.

We will divide the remaining of the proof of Theorem \ref{main}
into two cases depending on the genus of the surface $S$.
The proof when $S=S^2$ or $\bT^2$ is  easier mainly due to Proposition \ref{homo01}.
We give a proof of Theorem \ref{main} in these two special cases first
and deal with the general cases later.

\begin{proof}[Proof of Theorem \ref{main}. Part 1.]
Let $\cU_n^r$ be the subset given in Proposition \ref{rob-ks}, 
$\cV_n \subset \cN^r_4(2) \cap \cU_n^r$ be the subset given in Proposition \ref{pro:bir} and
$\cR=\bigcap_n \cV_n$. Then $\cR_{\ast}:=\cR \cap \cU_{\ast}$ is a residual subset of $\cU_{\ast}$.
 
Let $\Phi \in \cR_{\ast}$. Then for any hyperbolic periodic point 
$p$ of $\Phi$ with minimal period $n$, the center leaf $\cF^c_{\Phi}(p)$ is diffeomorphic to the surface $S$ and is invariant under $\Phi^n$, where $n$ is the minimal period of $p$.  
It follows from Theorem \ref{Moser} and Proposition \ref{pro:R} that every elliptic periodic point of  the center-leaf map 
$\Phi^n: \cF^c_{\Phi}(p) \to \cF^c_{\Phi}(p)$ is nonlinearly stable.
Combining with Proposition \ref{rob-ks}, we have that
the map $\Phi^n|_{\cF^c_{\Phi}(p)}$ satisfies all three conditions (G1)--(G3) given in Section \ref{ss.gene}.
That is, $\Phi^n|_{\cF^c_{\Phi}(p)} \in \cG^4_{\Omega|_{\cF^c_{\Phi}(p)}}(\cF^c_{\Phi}(p))$. 
Then it follows from Proposition \ref{homo01} that the hyperbolic periodic point $p$ admit transverse homoclinic points with respect to the surface map  $\Phi^n|_{\cF^c_{\Phi}(p)}$.
These points are also   transverse homoclinic points of $p$ for $\Phi$ on the ambient manifold $M$.
This holds for any hyperbolic periodic point $p$ and for any map $\Phi\in\cR_{\ast}$.
So Theorem \ref{main} holds for every $r\ge 4$ when $S=S^2$ or $\bT^2$.
The $C^r$-generic existence of transverse homoclinic points with $1\le r\le 3$ follows directly from the $C^4$-generic existence since it is  a $G_{\delta}$ property.
\end{proof}

In the case $S=S^2$ or $\bT^2$, no secondary perturbation is needed during the proof of Theorem \ref{main}.
Next we deal with the remaining case that $S$ is a closed surface of genus $g_S\ge 2$.

\begin{proof}[Proof of Theorem \ref{main}. Part 2.]
Suppose $g_S\ge 2$.
Let $\cU_n^r$ be the subset given in Proposition \ref{rob-ks},
$\cV_n$ be the subset given in Proposition \ref{pro:bir},
$\cR=\bigcap_n \cV_n$, and $\cR_{\ast}= \cR \cap \cU_{\ast}$ be the same residual subset of $\cU_{\ast}$ as in the first part of the proof. Applying Proposition \ref{pro.LCS} to the periodic center leaves of a map $\Phi\in \cR_{\ast}$, one has the dichotomy that either every hyperbolic periodic point in that leaf admits (leafwise) transverse homoclinic points, or the stable and unstable manifolds of every hyperbolic periodic points are dense on the periodic center leaf. To prove the theorem, we need the following:\\

\noindent{\bf Claim.} 
Let $n\ge 1$, $\cW_{n}$ be the set of maps $\Phi\in \cV_n \cap \cU_{\ast}$ such that every hyperbolic periodic point $p\in P_n(\Phi)$ admits transverse homoclinic points. 
Then $\cW_{n}$  is a $C^1$-open and $C^r$-dense subset of $\cU_{\ast}$.

\begin{proof}[Proof of Claim]
Since $\cV_n \subset \cU_n^r$, it follows directly from Proposition \ref{rob-ks} the set $P_n(\Phi)$ is finite and varies continuously on $\cV_n \cap \cU_{\ast}$. Moreover, the existence of  transverse homoclinic points for all hyperbolic periodic points in $P_n(\Phi)$ is a $C^1$-open condition in $\cV_n \cap \cU_{\ast}$.
Therefore, the subset $\cW_{n}$ is $C^1$-open by its definition.
Next we will prove that $\cW_{n}$ is also $C^r$-dense in $\cU_{\ast}$.

Let $\Phi\in \cU_{\ast}$ be fixed. Pick a $C^r$-small neighborhood $\cX_n \subset \cV_n$ of $\Phi$ on which the function $\Psi\in \cX_n \mapsto |P_n(\Psi)|$  is constant. Name the hyperbolic ones in $P_n(\Psi)$ by $p_i(\Psi)$, $1\le i \le I_n$ for some $I_n \ge 0$. Recall that $B_{C^r}(\Phi, \delta)$ is the $\delta$-ball of maps $\Psi\in \Diff^r_{\Omega}(M)$ with $C^r$-distance $d_{C^r}(\Psi, \Phi)<\delta$. 
Pick $\eps>0$ such that $B_{C^r}(\Phi, (1+I_n)\eps ) \subset \cX_n$ and a map $\Psi_0\in B_{C^r}(\Phi, \eps) \cap \cR$. 
Then the restriction $\Psi_0^n: \cF^c_{\Psi_0}(p_i) \to \cF^c_{\Psi_0}(p_i)$
satisfies $\Psi_0^n|_{\cF^c_{\Psi_0}(p_i)} \in \cG^4_{\Omega|_{\cF^c_{\Psi_0}(p_i)}}(\cF^c_{\Psi_0}(p_i))$ for each $1\le i \le I_n$.

Case 1. $|P_h(\Psi_0^n|_{\cF^c_{\Psi_0}(p_i)})|> 2g_S -2$ for each $1\le i \le I_n$. 
It follows from Proposition \ref{pro.LCS}  that the hyperbolic periodic point $p_i$ has (leafwise) transverse homoclinic points, $1\le i\le I_n$. Therefore, $\Psi_0 \in \cW_n$.

Case 2. $|P_h(\Psi_0^n|_{\cF^c_{\Psi_0}(p_j)})|= 2g_S -2$ for some (or all) $1\le i\le I_n$.
Relabeling these points if necessary, we assume this equality holds for each $1\le j\le J_n$
for some $1\le J_n \le I_n$.
In the following we will construct a map $\Psi_1 \in B_{C^r}(\Psi_0, \eps)$ for which the point $p_1$ admits transverse homoclinic points. Moreover, the perturbation $\Psi_1$ is $C^r$-close enough to $\Psi_0$ such that the existing transverse homoclinic points for hyperbolic periodic points in $P_{n}(\Psi_0)$ persist under the perturbation $\Psi_1$. Since $\cR_{\ast}$  is $C^r$-dense in $\cU_{\ast}$ and the existence of  transverse homoclinic points is $C^1$-open, we can assume $\Psi_1\in \cR_{\ast}$, too. Then by induction, for each $2\le j \le J_n$, we obtain a map  $\Psi_j \in B_{C^r}(\Psi_{j-1}, \eps)\cap \cR_{\ast}$ close enough to $\Psi_{j-1}$  for which the point $p_j$ admits transverse homoclinic points and the existing transverse homoclinic points for hyperbolic periodic points in $P_{n}(\Psi_{j-1})$ persist under the perturbation $\Psi_j$.
It follows that every hyperbolic periodic point in $P_n(\Psi_{J_n})$ admits transverse homoclinic points. That is,
$\Psi_{J_n} \in \cW_n \cap B_{C^r}(\Phi, (1+I_n)\eps) \subset \cW_n \cap \cX_n$. 
Since  $\cX_n$ is an arbitrarily chosen neighborhood of an arbitrarily chosen map $\Phi \in \cU_{\ast}$, it follows that $\cW_n$ is $C^r$-dense. This will complete the proof of the claim.

The construction of  the perturbation $\Psi_1$ follows the approach in \cite[\S4]{XZ06},
combining Proposition \ref{pro.LCS} on generic surface diffeomorphisms.

For $\eps>0$ given as above, pick $\delta>0$ (much smaller than $\eps$) such that 
for any points $x, y \in M$ with $d(x,y)<\delta$, 
$\cF_{\Psi_0}^s(x, 3\eps)$ and $\cF^{cu}_{\Psi_0}(y, 3\eps)$ intersect at a unique point,
and $\cF_{\Psi_0}^{cs}(x, 3\eps)$ and $\cF^{u}_{\Psi_0}(y, 3\eps)$  intersect at a unique point.
Applying Proposition \ref{pro.denseCL}, 
we can pick another periodic center leaf, say $\cF^c_{\Psi_0}(\hat{p})$ for some point $\hat p \in B(p_1, \delta)$, 
such that $d(\cF^c_{\Psi_0}(\hat{p}), \cF^c_{\Psi_0}(p_1))<\delta$.
Let $\hat{n}$ be the period of the center leaf $\cF^c_{\Psi_0}(\hat{p})$.
By the choice of $\Psi_0$, we have $\Psi_0^{\hat{n}}|_{\cF^c_{\Psi_0}(\hat{p})} \in \cG^4_{\Omega|_{\cF^c_{\Psi_0}(\hat{p})}}(\cF^c_{\Psi_0}(\hat{p}))$.

The initial choice of the point $\hat{p}$ in the periodic center leaf  $\cF^c_{\Psi_0}(\hat{p})$ might be nonperiodic.
Since $g_S \ge 2$, it follows from Proposition \ref{pro.LCS} that $|P_h(\Psi_0^{\hat{n}}|_{\cF^c_{\Psi_0}(\hat{p})})|\ge 2g_S -2 \ge 2$. In particular, there do exist hyperbolic periodic points 
on $\cF^c_{\Psi_0}(\hat{p})$. Let $q$ be such a hyperbolic periodic point on $\cF^c_{\Psi_0}(\hat{p})$ and $m$ be the minimal period of $q$. Note that $m$ can be much larger than the number $n$. 
In the following we will use the point $q$ instead of $\hat{p}$ as the marked point on the center leaf  $\cF^c_{\Psi_0}(\hat{p}) = \cF^c_{\Psi_0}(q)$.

Pick a point $\hat{q}\in \cF^c_{\Psi_0}(p_1)$ with $d(q, \hat{q})<\delta$.
Then $\cF_{\Psi_0}^s(q, \eps)$  and $\cF^{cu}_{\Psi_0}(\hat{q}, \eps)$ intersect at a unique point, say $v$. That is,
$v\in \cF_{\Psi_0}^s(q, \eps) \cap \cF^u_{\Psi_0}(x, \eps)$ for some $x\in \cF^c_{\Psi_0}(\hat{q}, \eps) \subset \cF^c_{\Psi_0}(p_1)$.
Similarly, $\cF_{\Psi_0}^u(q, \eps)$ and $\cF^{cs}_{\Psi_0}(\hat{q}, \eps)$ intersect at a unique point, say $w$. That is,
$w\in \cF_{\Psi_0}^u(q, \eps) \cap \cF^s_{\Psi_0}(y, \eps)$ for some $y\in  \cF^c_{\Psi_0}(\hat{q}, \eps)\subset \cF^c_{\Psi_0}(p_1)$.
Since $|P_h(\Psi^n|_{\cF^c_{\Psi_0}(p_1)})|= 2g_S -2$,
it follows from Proposition \ref{pro.LCS}  
that the leafwise stable and unstable manifolds $W^{s, u}(p_1, \Psi_0^n|_{\cF^c_{\Psi_0}(p_1)})$ of $p_1$ are dense on the whole leaf $\cF^c_{\Psi_0}(p_1)$.
Therefore, we can pick
\begin{enumerate}
\item a sequence of points $x_j \in W^u(p_1, \Psi_0^n|_{\cF^c_{\Psi_0}(p_1)})$ that converge to $x$

\item a sequence of points $y_j \in W^s(p_1, \Psi_0^n|_{\cF^c_{\Psi_0}(p_1)})$ that converge to $y$. 
\end{enumerate}

Note that  $\Psi_0^{-kn}(v)$ and $\Psi_0^{kn}(w)$ converge to the center leaf $\cF^c_{\Psi_0}(p_1)$ as $k\to +\infty$ 
and $\Psi_0^{km}(v)$ and $\Psi_0^{-km}(w)$ converge to the center leaf $\cF^c_{\Psi_0}(q)$ as $k\to +\infty$. 
These two points being non-recurrence  makes the $C^r$-perturbations around these two points straightforward. This is similar to setting as in \cite[\S4]{XZ06}. Following the same argument, one can find a $C^r$-small perturbation $\Psi_1$ of $\Psi_0$ supported
on two disjoint small neighborhoods of $v$ and $w$, respectively, such that
\begin{enumerate} 
\item $v\in \cF_{\Psi_1}^s(q, \eps) \cap \cF^u_{\Psi_1}(x_j, \eps)$ for some $x_j$ sufficiently close to $x$,

\item $w\in \cF_{\Psi_1}^u(q, \eps) \cap \cF^s_{\Psi_1}(y_j, \eps)$ for some $y_j$ sufficiently close to $y$.
\end{enumerate}
Note that $\Psi_1= \Psi_0$ on both center leaves  $\cF^c_{\Psi_0}(p_1)$ and $\cF^c_{\Psi_0}(q)$.
It follows that  $v\in W_{\Psi_1}^s(q) \cap W^u_{\Psi_1}(p_1)$ and $w\in W_{\Psi_1}^u(q) \cap  W_{\Psi_1}^{s}(p_1)$.
That is, there is a heteroclinic cycle between the two hyperbolic periodic points $p_1$ and $q$ for the perturbed map $\Psi_1$.
Making a further perturbation if necessary, we may assume that the heteroclinic intersections at both $v$ and $w$
are transverse. Then it follows from the Lambda Lemma that there are transverse homoclinic points 
for the hyperbolic periodic point $p_1$. This completes the construction of the perturbation $\Psi_1$ 
and the proof of the claim.
\end{proof}

It follows from Claim that $\cW_{n}$ is $C^1$-open and $C^r$-dense for each  $n\ge 1$.
Then the set $\cR_{\ast}':=\cR_{\ast} \cap (\cap_{n\ge 1}\cW_n)$ is a residual subset of $\cU_{\ast}$. Moreove,  for each $\Phi \in \cR_{\ast}'$, every hyperbolic periodic point of $\Phi$ admits transverse homoclinic points. This completes the proof of Theorem \ref{main}.
\end{proof}

\section*{Acknowledgment}

The author express his sincere gratitude to the anonymous referee for his/her invaluable
comments, which help him  to improve the presentation of the paper significantly.

\end{document}